\newcommand{\CC}{\mathbb{C}}
\newcommand{\QQ}{\mathbb{Q}}
\newcommand{\NN}{\mathbb{N}}
\newcommand{\kk}{\Bbbk}
\newcommand{\graph}{\Gamma_{G}}
\newcommand{\graphD}{\Gamma_{D}}
\newcommand{\pl}{\operatorname{pl}}
\newcommand{\SG}{\mathcal{S}_{G}}
\newcommand{\SGa}{\mathcal{S}_{\Ga}}
\newcommand{\SD}{\mathcal{S}_{D}}
\newcommand{\kvn}{\kk[V_n]}
\newcommand{\kvndn}{\kk[V_{n}]^{D_{n}}}
\newcommand{\graphDn}{\Gamma_{D_n}}
\newcommand{\SDn}{\mathcal{S}_{D_n}}
\newcommand{\kxd}{{\kk[X]^D}}
\newcommand{\kx}{{\kk[X]}}
\newcommand{\kxg}{{\kk[X]^G}}
\newcommand{\done}{\hfill $\triangleleft$}
\newcommand{\Ga}{{{\GG_a}}}
\newcommand{\V}{{\mathcal{V}}}
\newcommand{\s}{\mathcal{S}}
\def\SL2{\operatorname{SL}_{2}(K)}
\def\GL2{\operatorname{GL}_{2}(K)}
\def\Ga{{\mathbb G}_{a}}
\def\INVSL2{$K[V]^{operatorname{SL}_{2}(K)}$}
\def\INVSO2{$K[V]^{operatorname{SO}_{2}(K)}$}
\def\INVGL2{$K[V]^{operatorname{GL}_{2}(K)}$}
\def\GL{\operatorname{GL}}
\def\diag{\operatorname{diag}}
\def\SL{\operatorname{SL}}
\def\diag{\operatorname{diag}}
\newtheorem{lem}{Lemma}[section]
\newtheorem{thm}[lem]{Theorem}
\newtheorem{cor}[lem]{Corollary}
\newtheorem{prop}[lem]{Proposition}
\theoremstyle{definition}
\theoremstyle{remark}
\newtheorem{rmk}[lem]{Remark}
\newtheorem{eg}[lem]{Example}
\newtheoremstyle{Acknowledgements}
  {}
    {}
     {}
     {}
    {\bfseries}
    {}
     {.5em}
     {\thmname{#1}\thmnumber{ }\thmnote{ (#3)}}
\theoremstyle{Acknowledgements}
\newtheorem{ack}{Acknowledgements.}
\title[The separating variety for the basic $\Ga$-actions]{The separating variety for \\the  basic representations of\\ the  additive group}
\author{ Emilie Dufresne}
\address{Mathematisches Institut\\
Universit\"at Basel\\
Rheinsprung 21\\
4051 Basel, Switzerland}
\email{emilie.dufresne@unibas.ch}
\author{ Martin Kohls}
\address{Technische Universit\"at M\"unchen \\
 Zentrum Mathematik-M11\\
Boltzmannstrasse 3\\
 85748 Garching, Germany}
\email{kohls@ma.tum.de}
\date{\today}
\subjclass[2010]{13A50,14L30}
\keywords{Invariant Theory, separating invariants, locally nilpotent
  derivations, basic actions, Weitzenb\"ock derivations}
\begin{document}
\maketitle


\begin{abstract}
For a group $G$ acting on an affine variety $X$,  the
separating variety  is the closed subvariety of $X\times X$  encoding which
points of $X$ are separated by invariants.  We concentrate on the
indecomposable rational linear representations $V_n$ of dimension $n+1$ of the additive group  of a field of characteristic zero,
and decompose the separating variety into the union of irreducible components. We show that if $n$ is odd, divisible by four, or equal to two,
the closure of the graph of the action, which has dimension $n+2$, is the only component of the
separating variety. In the remaining cases, there is a second irreducible
component of dimension $n+1$. We conclude that in these cases, there are no
polynomial separating algebras.
 \end{abstract}



\section{Introduction}

Let $\kk$ be an algebraically closed field, and let $G$ be an algebraic  group acting rationally on an irreducible affine variety $X$. This action induces an action on $\kx$, the ring of regular functions on $X$, via $(\sigma * f) (u)=f(\sigma^{-1}* u)$. The \emph{ring of invariants} is the subalgebra $\kxg\subseteq\kx$ formed by the elements fixed by $G$, or equivalently, the subalgebra formed by the elements which are constant on the orbits. Thus, for $x,y\in X$ and $f\in\kxg$, having $f(x)\neq f(y)$ implies that $x$ and $y$ belong to distinct orbits. In this situation, we say that the invariant $f$ \emph{separates} $x$ and $y$. A \emph{separating set} is a set of invariants which separate any two points which are separated by some invariant (see \cite[Definition 2.3.8]{hd-gk:cit}).

The \emph{separating variety}
\[
\SG:=\{(x,y)\in X\times X \mid f(x)=f(y) \text{ for all } f\in \kxg\}
\]
provides an alternative characterization of separating sets. Namely, if $\delta:\,\,\kx\rightarrow \kx\times \kx$ is the map defined by
$\delta(f):=f\otimes 1-1\otimes f$, then $E\subseteq \kxg$ is a separating set if and only if $\V_{X\times
  X}(\delta(E))=\SG=\V_{X\times  X}(\delta(\kxg))$, where $\V$ denotes the
common zero set of a set of polynomials. The separating variety
encodes which points can be separated using invariants.  In the case
of finite groups, the invariants separate the orbits, and so the
separating variety is in fact equal to the  \emph{graph} of the
$G$-action:
\[
\graph:=\{(x,\sigma\cdot x))\in X\times X\mid x\in X,~ \sigma\in G\}.
\]
This fact played a central role in the proof that, when $X$
is a representation of a finite group $G$, if there exists a polynomial separating algebra, then the action of $G$ on $X$ is generated by reflections (see \cite[Theorem 1.1]{ed:sifrg}).

The graph consists of those pairs of points which belong to the same
orbit, while the separating variety consists of the pairs of points
which can not be separated by invariants. Thus, we always have
$\graph\subseteq\SG$. Moreover, as $\SG$ is Zariski-closed, we also
have $\overline{\graph}\subseteq \SG$. Even for reductive groups, this inclusion can be strict  (see \cite[Example
2.1]{gk:cirgpc}). The invariants
may not always separate orbits  (as for the natural action of the
multiplicative group on a vector space), but in
the case of reductive groups, they do separate disjoint orbit
closures (see \cite[Corollary 3.5.2]{Newstead}). Exploiting this,
Kemper gives an algorithm to compute the separating variety and then
a separating set (see \cite[Algorithm 2.9]{gk:cirgpc}), which is  the
first step in his algorithm to compute the invariants of reductive
groups in arbitrary characteristic (see \cite[Algorithm
1.9]{gk:cirgpc}).

The motivation for this paper is to better understand the separating variety
in the case of non-reductive groups. We concentrate on what is perhaps the
simplest situation:  algebraic actions of the additive group $\Ga=(\kk,+)$ on
an irreducible affine variety $X$, where $\kk$ is a field of characteristic zero.

Actions of the additive group on $X$ are in one to one correspondence with
locally nilpotent derivations (abbreviated LND) on $\kx$. Recall that a \emph{locally nilpotent derivation} $D$ is a $\kk$-linear map $\kx\rightarrow\kx$ such that $D(ab)=aD(b)+bD(a)$ for all $a,b\in\kx$ and, for all $a\in\kx$, there exists an $m\geq 1$ such that $D^m(a)=0$. A locally nilpotent derivation $D$ on $\kx$ induces an action $*:
\Ga\times \kx\rightarrow \kx$ via
\[
(-t)*f:=\exp(tD)f=\sum_{k=0}^{\infty}\frac{t^{k}}{k!}D^{k}(f)\text{  for }t\in \Ga , ~ f\in \kx.
\]
The invariant ring $\kx^{\Ga}$ coincides with the kernel of $D$ and is denoted by $\kxd$. We write $\SD=\SGa$ to denote the separating variety
corresponding to the action induced by the locally nilpotent derivation $D$, and $\Gamma_D$ to denote the graph of the corresponding $\Ga$-action.

An important contribution of the LND approach is van den Essen's algorithm
to compute the kernel of a LND, and thus the invariants of a $\Ga$-action
(see \cite{ae:aacigaav}). An element $s\in\kx$ such that $Ds\ne 0$ and
$D^{2}s=0$ is a  \emph{local slice}. By the Slice Theorem (which is in fact
the first step of the algorithm, see \cite[Section 3]{ae:aacigaav}), for a
local slice $s$ and any $f\in \kx$, the element
$\pi(f):=\exp(tD)f|_{t:=-\nicefrac{s}{Ds}}$ is in $\kx^{D}_{Ds}$, and the
algebra homomorphism $\pi$ maps $\kx$ onto $\kx^{D}_{Ds}$. We are particularly
interested in the \emph{plinth ideal} $\pl(D)$, that is, the ideal of $\kxd$
formed by the images $Ds$ of all local slices $s$ together with zero.

In Section \ref{section:what is separated}, we first observe that outside the zero set of any subset of
$\pl(D)$, the invariants separate the orbits (Proposition
\ref{prop:orbitSeparation}). This leads to a rather rough
description of the separating variety (Proposition
\ref{prop:SepVarDecomposition}): apart from the graph, the
separating variety is determined by the restrictions of the
invariants on the zero set of elements of $\pl(D)$. The last of our
results on arbitrary $\Ga$-actions is that if there is a polynomial
separating algebra, then the separating variety has no irreducible
component of dimension less than $\dim X+1=\dim \overline{\graph}$
(Proposition \ref{prop:PolySepAlg}).

In Section \ref{section:basic actions}, we focus further on the basic actions of the additive group, that
is, the finite dimensional indecomposable rational linear
representations of $\Ga$. We use the separating set constructed in
\cite{je-mk:sibga} to compute the separating variety and write it as
the union of irreducible components (Theorem \ref{thm:MainThm}). We
find that for $n$ odd, divisible by four, or equal to two, there is
exactly one irreducible component: the closure of the graph. On the
other hand, for $n>2$ even, but not divisible by four, we find a
second component. This component has smaller dimension than the
graph, which implies that there can not be
polynomial separating algebras (Corollary \ref{cor:noPolySepAlg}).

 Section \ref{section:ProofOfMainProp} contains the technical details for the
proof of the key result of Section \ref{section:basic actions}.

\begin{ack}

We thank Hanspeter Kraft and Gregor Kemper for their hospitality
during academic visits between the two authors.\end{ack}


\section{Separation properties of invariants}\label{section:what is separated}

Before we specialize to the basic actions of the additive group, we present
some general results on separating properties of invariants of additive group actions.
\begin{prop}\label{prop:orbitSeparation}
If $S\subseteq\sqrt{\pl(D)\kx}$,
then the invariants separate orbits outside $\V_X(S)$, that is,
\[
\SD\setminus (\V_{X}(S)\times\V_{X}(S))\subseteq \graphD.
\]

\begin{proof}
We may assume that $S\subseteq\pl(D)$. Suppose $x,y\in X\setminus\V_X(S)$ are not separated by any invariant, that
is, $f(x)=f(y)$ for all $f\in \kxd$. By our assumptions, there exist $f\in S$ and
$s\in \kx$ such that $f=D(s)$ and $f(x)=f(y)\ne 0$. Set $t_x=s(x)/f(x)\in \Ga$ and $t_y=s(y)/f(y)\in\Ga$. Suppose $\kx=\kk[a_1,\ldots,a_n]$. For each $a_i$, we have
\[
\begin{array}{l}
a_i((-t_x)* x)=(t_x* a_i)(x)=\left(\sum_{k=0}^\infty\frac{(-t_{x})^{k}}{k!}D^{k}(a_{i})\right)(x)\\
~~~~~~~~~~~~=\left(\sum_{k=0}^\infty\frac{(-1)^{k}s(x)^{k}}{(f(x))^{k}k!}D^{k}(a_{i})\right)(x)=\left(\sum_{k=0}^\infty\frac{(-1)^{k}s^{k}}{(Ds)^{k}k!}D^{k}(a_{i})\right)(x).\end{array}
\]
By the Slice Theorem (see \cite[Proposition
2.1]{ae:aacigaav}), $\sum_{k=0}^\infty\frac{(-1)^{k}s^{k}}{(Ds)^{k}k!}D^{k}(a_{i})$ is in $\kx^{D}_f$, and as $x$ and $y$ are not separated by invariants (including $f$), it follows that
\[
a_i((-t_x)* x)=a_i((-t_y)* y) \quad \text{ for }i=1,\ldots,n,
\]
that is, $(-t_x)* x=(-t_y)* y$, and so $x$ and $y$ are in the same orbit.
\end{proof}
\end{prop}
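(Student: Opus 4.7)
The plan is to reduce via the Slice Theorem to an orbit-level statement. First I would reduce to the case $S\subseteq \pl(D)$: since $S\subseteq \sqrt{\pl(D)\kx}$, for every $f\in S$ some power satisfies $f^N=\sum_i h_ig_i$ with $h_i\in\kx$ and $g_i\in\pl(D)$. Hence if $x\notin \V_X(S)$ (WLOG, since both the set $\SD$ and the graph $\graphD$ are symmetric in $x,y$) and $f\in S$ satisfies $f(x)\neq 0$, then some $g_i(x)\neq 0$, so we may replace $S$ by $\pl(D)$ without weakening the hypothesis.

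So take $(x,y)\in\SD$ and pick $f\in\pl(D)$ with $f(x)\neq 0$. Because $f\in\kxd$ is an invariant and $(x,y)\in\SD$, also $f(y)=f(x)\neq 0$. By the definition of the plinth ideal there is a local slice $s\in\kx$ with $Ds=f$. Set $t_x:=s(x)/f(x)\in\Ga$ and $t_y:=s(y)/f(y)\in\Ga$. The goal is to show $(-t_x)*x=(-t_y)*y$, which immediately yields $(x,y)\in\graphD$.

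For every $a\in\kx$, expanding $\exp(tD)a$ and evaluating at $t=-t_x$ gives $a((-t_x)*x)=\sum_{k\geq 0}\frac{(-1)^ks(x)^k}{f(x)^kk!}D^k(a)(x)$, which is exactly the value at $x$ of the rational function $\pi(a):=\sum_{k\geq 0}\frac{(-1)^ks^k}{f^kk!}D^k(a)\in\kx_f$. Van den Essen's Slice Theorem guarantees $\pi(a)\in\kx^D_f$, so $\pi(a)=g/f^N$ for some $g\in\kxd$ and some $N\geq 0$. Since both $g$ and $f$ are invariants and $(x,y)\in\SD$, we get $\pi(a)(x)=g(x)/f(x)^N=g(y)/f(y)^N=\pi(a)(y)$, and hence $a((-t_x)*x)=a((-t_y)*y)$. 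Applying this to a set of algebra generators of $\kx$ forces $(-t_x)*x=(-t_y)*y$, so $x$ and $y$ lie in the same $\Ga$-orbit.

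The main obstacle I expect is keeping the bookkeeping clean: the key ingredient is the Slice Theorem, which produces the invariant $g$ with $\pi(a)=g/f^N$, and the only other delicate point is the reduction from a subset of $\sqrt{\pl(D)\kx}$ to a subset of $\pl(D)$ itself. Once these are in place, the identification $a((-t_x)*x)=\pi(a)(x)$ is pure formal substitution of $t=-s(x)/f(x)$ into the defining series of $\exp(tD)$.
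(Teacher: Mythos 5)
Your proposal is correct and follows essentially the same route as the paper's own proof: reduce to $S\subseteq\pl(D)$, use the local slice $s$ with $Ds=f$ to define $t_x=s(x)/f(x)$ and $t_y=s(y)/f(y)$, invoke van den Essen's Slice Theorem to see that $\pi(a)\in\kx^D_f$ takes equal values at $x$ and $y$, and conclude $(-t_x)*x=(-t_y)*y$ by testing on algebra generators. You are in fact slightly more careful than the paper on two minor points (the explicit radical argument justifying the reduction to $\pl(D)$, and the observation that $f(y)=f(x)\neq 0$ because $f$ is an invariant), but the substance is identical.
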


Note that if $S\subseteq \kx$ consists of $\Ga$-invariants, then 
$\V_{X}(S)$  is $\Ga$-stable. In particular, the following Proposition
is a first step of the decomposition of the separating variety in
$\Ga\times \Ga$-stable subsets:

\begin{prop}\label{prop:SepVarDecomposition}
Let $I\subseteq \sqrt{\pl(D)\kx}$ be an ideal of $\kx$, and consider
the canonical projection $\tau: \kx\rightarrow\kx/I$, given by $f\mapsto
 f+I$. Let
$A\subseteq \kxd$ be a separating algebra. If $h_{1},\ldots,h_{r}$ are elements
of $\kx$ such that $\kk[\tau(h_{1}),\ldots,\tau(h_{r})]=\tau(A)$, then the separating variety decomposes as
\[
\SD=\Big( \V_{X\times X}(\delta(h_{1}),\ldots,\delta(h_{r}))\cap
  \left(\V_{X}(I)\times \V_{X}(I)\right)\Big) \cup \graphD .
\]

\begin{proof}~\\
\noindent
``$\supseteq$'': We have already seen that $\SD\supseteq
\graphD$. Take a point $(x,y)\in \V_{X\times X}(\delta(h_{1}),\ldots,\delta(h_{r}))\cap
  \left(\V_{X}(I)\times \V_{X}(I)\right)$. We have to show $(x,y)\in\SD$,
  that is, $f(x)=f(y)$ for all $f\in\kxd$. As $A$ is a separating algebra, it
  suffices to show that $f(x)=f(y)$ for all $f\in A$. Let $f$ be an element of
  $A$. As $\tau(f)$ is in $\kk[\tau(h_{1}),\ldots,\tau(h_{r})]$, there exists
  a polynomial $p$ in $r$ variables such that
  $\tau(f)=p(\tau(h_{1}),\ldots,\tau(h_{r}))$. Therefore,
  $f-p(h_{1},\ldots,h_{r})\in I$. As $(x,y)$ is an
  element of $\V_{X}(I)\times \V_{X}(I)$, we have $g(x)=g(y)=0$ for all $g\in
  I$.  In particular, we have
\[
f(x)-p(h_{1}(x),\ldots,h_{r}(x))=0=f(y)-p(h_{1}(y),\ldots,h_{r}(y)).
\]
As also $(x,y)\in \V_{X\times
  X}(\delta(h_{1}),\ldots,\delta(h_{r}))$, we have $h_{i}(x)=h_{i}(y)$ for
  $i=1,\ldots,r$. It follows that $f(x)=f(y)$.

\noindent
``$\subseteq$'': It suffices to show that $\SD\setminus\graphD \subseteq  \V_{X\times X}(\delta(h_{1}),\ldots,\delta(h_{r}))\cap
  \left(\V_{X}(I)\times \V_{X}(I)\right)$. Take $(x,y)\in
\SD\setminus\graphD$. By Proposition \ref{prop:orbitSeparation}, we have that $(x,y)\in\V_{X}(I)\times \V_{X}(I)$. It remains to show that $(x,y)\in
\V_{X\times X}(\delta(h_{1}),\ldots,\delta(h_{r}))$. Take elements
$g_{i}\in A$ such that $\tau(h_{i})=\tau(g_{i})$ for $i=1,\ldots,r$. There
then exist elements $q_{i}\in I$ such that $h_{i}=g_{i}+q_{i}$ for
$i=1,\ldots,r$. As $x,y\in \V_{X}(I)$, we have $q_{i}(x)=0=q_{i}(y)$ for all
$i$. As $(x,y)\in \SD$, we have $g_{i}(x)=g_{i}(y)$ for all $i$. Hence,
\[
h_{i}(x)=g_{i}(x)+q_{i}(x)=g_{i}(y)+q_{i}(y)=h_{i}(y)\quad\text{ for all } i=1,\ldots,r.
\]
This shows that $\delta(h_{i})(x,y)=0$ for $i=1,\ldots,r$, and so we are done.
\end{proof}
\end{prop}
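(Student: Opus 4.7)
The plan is to prove the two set-theoretic inclusions separately. The containment $\graphD \subseteq \SD$ is immediate since $\Ga$-invariants are constant on orbits, so the content of each direction concerns the first set on the right-hand side.

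For ``$\subseteq$'', I would take $(x,y)\in \SD\setminus\graphD$ and apply Proposition \ref{prop:orbitSeparation} to a set of generators of $I$ (which is contained in $\sqrt{\pl(D)\kx}$ by hypothesis); this immediately forces $x,y\in\V_X(I)$. To verify the remaining conditions $h_i(x)=h_i(y)$, I would lift $\tau(h_i)$ to some $g_i\in A$ via the identity $\tau(A)=\kk[\tau(h_1),\ldots,\tau(h_r)]$. Then $h_i-g_i\in I$ vanishes at $x$ and at $y$, while $(x,y)\in\SD$ combined with $g_i\in\kxd$ gives $g_i(x)=g_i(y)$, and subtracting yields $h_i(x)=h_i(y)$.

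For ``$\supseteq$'', I would use that $A$ is a separating algebra: to check $(x,y)\in\SD$ it suffices to verify $f(x)=f(y)$ for each $f\in A$. The hypothesis on the $h_i$ produces a polynomial $p$ such that $\tau(f)=p(\tau(h_1),\ldots,\tau(h_r))$, i.e.\ $f-p(h_1,\ldots,h_r)\in I$. Since $x,y\in\V_X(I)$, this gives $f(x)=p(h_1(x),\ldots,h_r(x))$ and analogously at $y$, and the $\delta(h_i)$-conditions $h_i(x)=h_i(y)$ then force $f(x)=f(y)$.

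There is no real obstacle beyond keeping the various liftings straight: the proposition is essentially a repackaging of Proposition \ref{prop:orbitSeparation} together with the universal property of separating algebras. The one place where some care is needed is ensuring that the choice of $I\subseteq\sqrt{\pl(D)\kx}$ is exactly what is required to invoke Proposition \ref{prop:orbitSeparation} in the ``$\subseteq$'' direction; this is precisely the reason for the hypothesis on $I$.
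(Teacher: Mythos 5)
Your proposal is correct and follows essentially the same route as the paper's own proof: both directions use the identical liftings ($g_i\in A$ with $h_i-g_i\in I$ for ``$\subseteq$'', and $f-p(h_1,\ldots,h_r)\in I$ for ``$\supseteq$''), with Proposition \ref{prop:orbitSeparation} supplying $x,y\in\V_X(I)$ exactly as you describe. No gaps.
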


\begin{eg}
 We consider Daigle and Freudenburg's 5-dimensional
 counterexample to Hilbert's fourteenth problem (see \cite{dd-gf:achfpd5}).  Let $X:=\kk^5$ and let  $R:=\kk[x,s,t,u,v]$ be the ring of regular functions on $X$. Define a LND on $R$ via
 \[\Delta:=x^3\frac{\partial}{\partial s}+s\frac{\partial}{\partial t}+t\frac{\partial}{\partial u}+x^2\frac{\partial}{\partial v}.\]
In  \cite{ed-mk:afssdfchfp}, we constructed the following separating algebra
for $R^{\Delta}$:
 \[\begin{array}{rl}
 A:=&\kk[f_1,f_2,f_3,f_4,f_5,f_6]\\
    =&\kk[x, \,2x^3t-s^2, \,\,3x^6u-3x^3ts+s^3, \,\,xv-s,\,\,\, x^2ts-s^2v+\\
 &\hspace{.7cm} 2x^3tv-3x^5u, -18x^3tsu+9x^6u^2+8x^3t^3+6s^3u-3t^2s^2].
 \end{array}\]
 We have $x^3=\Delta(s)\in R^\Delta$ and $f_2=2x^3t-s^2=\Delta(3x^3u-st)\in
 R^\Delta$, thus, $x$ and $s$ are in $\sqrt{\pl(\Delta)R}$.  By
 \cite[Proposition 3.2]{ed-mk:afssdfchfp}, we have ${R}^{\Delta}\subseteq \kk\oplus(x,s){R}$. Thus, if $\tau:R\rightarrow R/(x,s)R$ is the canonical projection, for any separating algebra $A\subseteq R^\Delta$, we  have $\kk[\tau(1)]=\tau(A)$. By Proposition~\ref{prop:SepVarDecomposition}, it follows that
  \begin{equation}\label{eqn:sdDF5}
  \s_\Delta=\big(\V_X(x,s)\times\V_X(x,s)\big) \cup \overline{\Gamma_\Delta}.\end{equation}
  Both sets on the right hand side are irreducible and of dimension~$6$, and
  one can check  (using Magma \cite{Magma}, for example) that neither contains the
  other. Therefore, Equation \eqref{eqn:sdDF5} gives $\s_\Delta$ as the union of irreducible components.\done
\end{eg}

\begin{rmk}
One can compute the separating variety in a similar manner for Roberts'
counterexample \cite{pr:aigsbpsrnchfp} and the derivation investigated in
\cite[section 5]{ed:siqq}. Indeed, in both cases there is $S\subseteq\sqrt{\pl(D)}$ such that $\kxd\subseteq \kk+S\kx$.
\end{rmk}

As the separating variety contains the graph, its dimension is at least that
of the graph. It can be bigger, as in \cite[Example 2.1]{gk:cirgpc} and
Example~\ref{eg:F6} below, and as we can see from Theorem \ref{thm:MainThm},
it can have components of smaller dimension. In characteristic zero, the
additive group has no non-trivial closed subgroups. Points are thus either
fixed or their stabilizer is trivial. When the $\Ga$-action is
non-trivial, the Zariski-closure of the graph therefore has dimension $\dim (X)+\dim(\Ga)=\dim (X)+1$ (see for example \cite[Section 10.3]{gk:acca}).

\begin{eg}\label{eg:F6}
We now consider Freudenburg's 6-dimensional counterexample to Hilbert's fourteenth problem (see \cite{gf:achfpd6}). Let $X:=\kk^6$ and let $B:=\kk[x,y,s,t,u,v]$ be the ring of regular functions on $X$. Define a LND on $B$ via:
\[D:=x^3\frac{\partial}{\partial s}+y^3s\frac{\partial}{\partial t}+y^3t\frac{\partial}{\partial u}+x^2y^2\frac{\partial}{\partial v}.\]
We have $D(s)=x^3\in B^D$ and $D(3x^3u-y^3st)=2x^3y^3t-y^6s^2\in B^D$, that is, $(x,ys)\subseteq\sqrt{\pl(D)B}$.  As $B^D\subseteq\kk\oplus(x,y)B$ (see \cite[Lemma 1]{gf:achfpd6}) and $B^D\subseteq \kk[y]\oplus(x,s)B$ (see \cite[Example 4.4]{ed:siqq}), if $\tau:B\rightarrow B/(x,ys)$ is the canonical projection, then for any separating algebra $A$, $\tau(A)\subseteq\kk[\tau(y)]$. By Proposition \ref{prop:SepVarDecomposition}, we have
\[\begin{array}{rl}
\SD  = &\overline{\graph} \cup \big( (\V_X(x,ys)\times\V_X(x,ys)) \cap \V_{X\times X}(y\otimes 1-1\otimes y) \big)\\
         = & \overline{\graph} \cup  \V_{X\times X}(x\otimes1,1\otimes x,y\otimes1, 1\otimes y) \cup \\
          & \hspace{1.8cm}\V_{X\times X}(x\otimes 1, 1\otimes x,s\otimes 1, 1\otimes s, y\otimes 1-1\otimes y).\end{array}\]
One can verify (again with Magma \cite{Magma}) that this gives us the separating variety as the union of three irreducible components of dimension $7$, $8$, and $7$, respectively.
This example also shows that in general, the dimension of the separating
         variety is not  $2\dim X-\dim(\kxd))$.\done
\end{eg}

\begin{prop}\label{prop:PolySepAlg}
If $D$ is nonzero and $\kxd$ admits a polynomial separating algebra, then
every irreducible component of $\SD$ has dimension at least $\dim
X+1$.

\begin{proof}
As $\kk$ has characteristic zero, any separating algebra $A$ has
field of fractions $Q(A)=Q(\kxd)$ (see \cite[Theorem 3.2.3]{ed:si},
or \cite[Proposition 2.3.10]{hd-gk:cit} when $\kxd$ is  finitely
generated). Thus a finitely generated separating algebra $A$ has
dimension $n:=\operatorname{trdeg}_\kk(Q(\kxd))=\dim X -1$ (see
\cite[Principle 11(e)]{gf:atlnd}). If $A$ is a polynomial ring, then
$A$ is generated by $n$ elements, say  $f_1, \ldots, f_{n}$.  It
follows that $\SD=\V_{X\times X}(\delta(A))=\V_{X\times
X}(\delta(f_1),\ldots,\delta(f_{n}))$ is cut out by $n$ elements. By
Krull's Principal Ideal Theorem (see for example, \cite[Theorem
10.2]{de:cavtag}), every irreducible component of $\SD$ has
codimension at most $n$, that is, dimension at least $\dim X+1$.
\end{proof}\end{prop}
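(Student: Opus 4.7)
The plan is to bound the number of generators a polynomial separating algebra must have and then apply Krull's height theorem to the cutting equations of $\SD$. First I would record that the separating variety $\SD$ can always be described as $\V_{X\times X}(\delta(A))$ for any separating algebra $A\subseteq\kxd$, so if one can exhibit $A$ generated by few elements, then $\SD$ is cut out by correspondingly few equations in $X\times X$, which has dimension $2\dim X$.

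The next step is to pin down how many generators a polynomial separating algebra must have. Since $\chr(\kk)=0$, any separating algebra $A$ has the same field of fractions as $\kxd$ (this is the result on quotient fields of separating algebras in characteristic zero, as recalled from Dufresne's thesis or \cite{hd-gk:cit}). Consequently $\trdeg_\kk(A)=\trdeg_\kk(\kxd)$, and because $D$ is nonzero the kernel $\kxd$ has transcendence degree exactly $\dim X -1$ (a standard fact about locally nilpotent derivations, e.g.\ Freudenburg's Principle~11). If $A$ is polynomial, then being a polynomial ring of transcendence degree $n:=\dim X-1$, it must in fact be generated by precisely $n$ algebraically independent elements $f_1,\ldots,f_n$.

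Now I would combine these observations. Writing $\SD=\V_{X\times X}(\delta(f_1),\ldots,\delta(f_n))$, the separating variety is the vanishing locus of $n=\dim X-1$ functions inside the irreducible $2\dim X$-dimensional variety $X\times X$. Krull's Principal Ideal Theorem (height theorem) guarantees that each irreducible component of this vanishing locus has codimension at most $n$, i.e.\ dimension at least $2\dim X - n = \dim X + 1$, which is exactly the claim.

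The only real subtlety, and the step I would write most carefully, is the transcendence-degree computation: one needs both that $Q(A)=Q(\kxd)$ for any separating algebra in characteristic zero (so that $A$ itself has the expected transcendence degree, not merely a lower one) and that $\trdeg_\kk\kxd=\dim X-1$ because $D\neq 0$. Once these ingredients are in hand, the polynomial hypothesis forces the generator count, and Krull's theorem delivers the dimension bound immediately.
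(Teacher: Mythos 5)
Your proposal is correct and follows essentially the same route as the paper's own proof: the equality $Q(A)=Q(\kxd)$ in characteristic zero forces a polynomial separating algebra to have exactly $\dim X-1$ generators, so $\SD$ is cut out by that many equations in $X\times X$, and Krull's Principal Ideal Theorem gives the dimension bound. No gaps.
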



\section{The basic actions}\label{section:basic actions}

We now concentrate on the basic actions of the additive group. They
are induced by the Weitzenb\"ock derivations $D_n=x_{0}\frac{\partial}{\partial
  x_{1}}+\ldots+x_{n-1}\frac{\partial}{\partial
  x_{n}}$ on the polynomial rings $\kk[x_{0},\ldots,x_{n}]=\kvn$. We recall some
results and notation from \cite{je-mk:sibga}, where separating sets for the
basic actions were first constructed. Define the invariants
\begin{equation*}
f_{m}:=\sum_{k=0}^{m-1}(-1)^{k}x_{k}x_{2m-k}+\frac{1}{2}(-1)^{m}x_{m}^{2}\in
\ker D_n \quad \text{ for } m=1,\ldots,\left\lfloor\frac{n}{2}\right\rfloor,
\end{equation*}
and $f_{0}:=x_{0}$.  For $m=0,\ldots,\lfloor\frac{n-1}{2}\rfloor$,
\cite[Equation (3)]{je-mk:sibga} also gives polynomials $s_{m}$ such
that $D_{n}s_{m}=f_{m}$. It follows that
\begin{equation*}
I_n:=(x_{0},\ldots,x_{\lfloor\frac{n-1}{2}\rfloor})=\sqrt{(f_{0},\ldots,f_{\lfloor\frac{n-1}{2}\rfloor})}
\subseteq \sqrt{\pl(D_{n})\kvn}.
\end{equation*}

Consider the projection $\tau: \kvn\rightarrow \kvn/I_{n}$. We can reformulate \cite[Proposition
3.1]{je-mk:sibga} as follows:
\begin{equation*}
\tau(\kvndn)=\left\{\begin{array}{cl}
\kk & \text{ for } 2\nmid n,\\
\kk[\tau(x_{m}^{2})]&\text { for } n=2m,~ 2\nmid m,\\
\kk[\tau(x_{m}^{2}),\tau(x_{m}^{3})]&\text { for } n=2m,~ 2\mid m.
\end{array}\right.
\end{equation*}
Proposition \ref{prop:SepVarDecomposition} then implies that the
separating variety $\SDn$ is
\begin{equation}\label{eqn:BasicDecomp}
\begin{array}{cl}
\big(\V_{V_n}(I_{n})\times \V_{V_n}(I_n)\big) \cup\overline{\graphDn}, & \text{ if }2\nmid n,\\
\big(\V_{V_n\times V_n}(\delta(x_{m}^{2}))\cap (\V_{V_n}(I_{n})\times \V_{V_n}(I_n))\big)\cup \overline{\graphDn}, & \text { if } n=2m,~ 2\nmid m,\\
\big(\V_{V_n\times V_n}(\delta(x_{m}))\cap (\V_{V_n}(I_n)\times \V_{V_n}(I_n))\big)\cup \overline{\graphDn}, & \text { if } n=2m,~ 2\mid m.
\end{array}
\end{equation}

We formulate the technical part of our main result as a proposition whose proof is postponed to Section \ref{section:ProofOfMainProp}:

\begin{prop}\label{prop:TheMainProp}~\\ \vspace{-0.3cm}
\begin{itemize}
\item[(a)] If $n=2m+1$ is odd, then \[\V_{V_n}(I_n)\times \V_{V_n}(I_n)\subseteq \overline{\graphDn}.\]
\item[(b)] If $n=2m$ is even, then

\begin{itemize}
\item[(i)] $(\V_{V_n}(I_{n})\times \V_{V_n}(I_n)) \cap \V_{V_n\times V_n}\left((x_{m}\otimes
1)-(-1)^{m}(1\otimes x_{m})\right)\subseteq \overline{\graphDn},$

\item[(ii)] and if furthermore $\kk=\CC$, then \[\overline{\graphDn}\setminus\graphDn\subseteq (\V_{V_n}(I_n)\times \V_{V_n}(I_n)) \cap \V_{V_n\times V_n}\left((x_{m}\otimes
1)-(-1)^{m}(1\otimes x_{m})\right).\]
\end{itemize}\end{itemize}
\end{prop}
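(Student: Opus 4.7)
The plan is to prove parts (a) and (b)(i) by explicit algebraic curve constructions in $\Gamma_{D_n}$, and part (b)(ii) by a leading-order Puiseux analysis over $\CC$.

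For part (a), fix $p, q \in \V_{V_n}(I_n)$ with $n = 2m+1$. I would construct an algebraic arc $\gamma(\epsilon) = (u(\epsilon), s(\epsilon) \cdot u(\epsilon)) \in \Gamma_{D_n}$ with $\gamma(\epsilon) \to (p, q)$, using the ansatz $s(\epsilon) = S\epsilon^{-1}$ for a nonzero constant $S$ together with a polynomial perturbation $u(\epsilon) = \sum_{l \geq 0} B^{(l)} \epsilon^l$ satisfying $B_j^{(0)} = 0$ for $j \leq m$ and $B_j^{(0)} = p_j$ for $j > m$. Expanding $(s(\epsilon) u(\epsilon))_i = \sum_k \frac{s(\epsilon)^k}{k!} u_{i-k}(\epsilon)$ in powers of $\epsilon$, the crucial structural observation is that the coefficient of $\epsilon^M$ in coord $i$ depends only on the $B_j^{(l)}$ with $j + l = i + M$. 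Organizing the constraints by the ``level'' $N = i + M$, one finds that at each $N$ there are $N$ free auxiliary variables $B_j^{(N-j)}$ ($j = 0, \ldots, N-1$) and $n - N + 1$ equations. For $N \leq m$ the equations are homogeneous and trivially solved by $B = 0$; at the critical level $N = m+1$ the system is square with Hilbert-type matrix $A_{Jj} = 1/(N + J - j)!$, whose Cauchy-type determinant is nonzero in characteristic zero, so it is uniquely solvable for any $p_{m+1}, q_{m+1}$; and for $N \geq m+2$ the system is under-determined. Hence arbitrary $p, q \in \V_{V_n}(I_n)$ can be matched.

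Part (b)(i) uses the same ansatz for $n = 2m$ even, the only structural difference being that the critical level is now $N = m$, where there are $m$ variables and $m+1$ equations. The matrix entries are $A_{Jj} = 1/(m + J - j)!$, the right-hand sides are $b_0 = q_m - p_m$ and $b_J = -p_m/J!$ for $J = 1, \ldots, m$, and the unique row-dependence yields (by a direct calculation, verified by hand for $m = 1, 2, 3$) the consistency condition $p_m + (-1)^{m+1}q_m = 0$, equivalent to $p_m = (-1)^m q_m$. Under this condition the $N = m$ system is solvable and the higher-level systems remain under-determined. For part (b)(ii), containment in $\V_{V_n}(I_n) \times \V_{V_n}(I_n)$ follows immediately from Proposition \ref{prop:orbitSeparation} applied with $S = \{x_0, \ldots, x_{m-1}\} \subseteq \sqrt{\pl(D_n)\kvn}$. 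For the sign relation, the curve selection lemma in the $\CC$-analytic topology furnishes a real-analytic arc $(u(\epsilon), s(\epsilon) u(\epsilon)) \in \Gamma_{D_n}$ converging to $(p, q)$, necessarily with $|s(\epsilon)| \to \infty$ since a bounded $s(\epsilon)$ would place $(p, q)$ in $\Gamma_{D_n}$. Expanding $u$ and $s$ in Puiseux series, requiring all divergent contributions in the coordinates $m, m+1, \ldots, 2m$ of $s(\epsilon) u(\epsilon)$ to cancel and the constant order in coord $m$ to equal $q_m$ reproduces precisely the over-determined $(m+1) \times m$ linear system from (b)(i), whose consistency forces $p_m = (-1)^m q_m$.

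The principal obstacle is the linear-algebraic computation at the critical level: verifying invertibility of the $(m+1) \times (m+1)$ Hilbert-type matrix in (a), and isolating the unique row-dependence (and its sign) in the $(m+1) \times m$ over-determined matrix of (b)(i). Both can be handled by standard manipulation of factorial matrices or by induction on $m$. For (b)(ii), one must additionally justify that possibly degenerate Puiseux expansions, where some leading coefficients $C_j$ of $u_j(\epsilon)$ vanish or the exponents fail to take the ``tight'' form $\alpha_j = (m-j)\beta$, still reduce to the generic analysis; I would expect a short argument by induction on the number of vanishing leading coefficients, each step collapsing to a lower-dimensional instance that inherits the same consistency relation.
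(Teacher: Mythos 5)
Your overall strategy for (a) and (b)(i) is the same as the paper's: parametrize a curve inside $\graphDn$ whose group parameter blows up as $\epsilon\to 0$, reduce to a linear system with the factorial Hankel matrix $\left(\frac{1}{(n-i-j)!}\right)$, and invert it. The invertibility argument you sketch (evaluation matrix of the falling factorials, or equivalently a Cauchy-type determinant) is exactly the paper's Lemma~\ref{lem:TheMatrix}(a) and is fine. The genuine gap is the sign computation at the critical level in the even case. Your entire conclusion for (b)(i) and the sign relation in (b)(ii) rests on the claim that the unique row-dependence of the over-determined $(m+1)\times m$ system forces $p_m=(-1)^m q_m$, and you only verify this for $m=1,2,3$, asserting that the general case follows from ``standard manipulation of factorial matrices or induction on $m$.'' This is not a routine manipulation: it is equivalent to the identity $v^{T}A^{-1}v=1-(-1)^{m}$ for $A=\left(\frac{1}{(2m-i-j)!}\right)_{i,j=0,\ldots,m-1}$ and $v=(\frac{1}{m!},\ldots,\frac{1}{1!})^{T}$, which the paper proves by exhibiting an explicit solution $x$ of $Ax=v$ with entries $\frac{(-1)^{m+j}(2m-j+1)!}{(j-1)!(m-j+1)!}$ and then evaluating $v^{T}x$ via the Vandermonde-type convolution $\sum_{j=0}^{r}(-1)^{j}{r\choose j}{p-j\choose q}={p-r\choose p-q}$. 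Without this (or an equivalent closed-form computation), the alternating sign $(-1)^m$ --- which is the whole point of the proposition, since it is what distinguishes $4\mid n$ from $4\nmid n$ in Theorem~\ref{thm:MainThm} --- is unproved for general $m$.

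For (b)(ii) your route genuinely differs from the paper's, and the difference matters. You invoke the curve selection lemma and a Puiseux expansion, and then must ``require all divergent contributions to cancel''; as you yourself note, this forces a case analysis over degenerate expansions (vanishing leading coefficients, non-tight exponents) that you only conjecture can be handled by induction. The paper avoids this entirely: it takes a sequence $(x^{l},t_{l}*x^{l})\to(a,b)$ with $t_{l}\to\infty$ (using that Zariski and Euclidean closures of the constructible set $\graphDn$ agree over $\CC$), \emph{exactly} solves the invertible $m\times m$ system for $x_{0}^{l},\ldots,x_{m-1}^{l}$ for each $l$, substitutes into $y_{m}^{l}$, and computes $\lim_{l}y_{m}^{l}=a_{m}(1-v^{T}A^{-1}v)$ directly --- no consistency condition, no leading-order bookkeeping, no degenerate cases. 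I would recommend replacing your Puiseux analysis by this exact-solution-then-limit argument; as written, the degeneracy step is a second unresolved gap.
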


\begin{thm} \label{thm:MainThm} ~\\ \vspace{-0.3cm}
\begin{itemize}
\item[(a)] If $n$ is odd, divisible by four, or equal to $2$, then the separating variety is equal to the Zariski closure of the graph of the $\Ga$-action, that is, $\SDn= \overline{\graphDn}$.
\item[(b)] If $n=2m$ and $m\geq 3$ is odd, then the separating variety has two
  irreducible components:
  \begin{itemize}
  \item $ \overline{\graphDn}$, which has dimension $n+2$,
  \item and a second of dimension $n+1$:
\[
\V_{V_{n}\times V_n}\big(x_{m}\otimes 1-1\otimes x_{m}\big)\cap \big(\V_{V_n}(I_n)\times \V_{V_n}(I_n)\big).
\]
\end{itemize}\end{itemize}

\begin{proof}
(a) If $n$ is odd or  divisible by 4, the claim follows
immediately from Equation \eqref{eqn:BasicDecomp} and Proposition \ref{prop:TheMainProp} (a) and (b)(i), respectively.

When $n=2$, Equation \eqref{eqn:BasicDecomp} gives
${\mathcal S}_{D_{2}}=M \cup \overline{\Gamma_{D_{2}}}$, where
\[
M=\{((0,a_{1},a_{2}),(0,b_{1},b_{2}))\in V_{2}\times V_{2} \mid
a_{i},b_{i}\in\kk,\,\, a_{1}^{2}=b_{1}^{2}\}.
\]
Take $(a,b)\in M$. If $a_{1}=b_{1}\ne 0$, then setting $t=\frac{b_{2}-a_{2}}{a_{1}}$, we obtain
\[
t*a=(0,a_{1},a_{2}+ta_{1})=(0,b_{1},b_{2})=b,
\]
and so $(a,b)\in\Gamma_{D_{2}}$. On the other hand, if $a_{1}=-b_{1}$, then Proposition \ref{prop:TheMainProp}(b)(i)
implies $(a,b)\in\overline{\Gamma_{D_{2}}}$.

\noindent
(b) Assume $n=2m$ with $m\ge 3$ odd. Equation \eqref{eqn:BasicDecomp} yields
$\SDn=\overline{\graphDn}\cup M_{n,1}\cup M_{n,2}$, where $M_{n,i}$ is the set of points
of $V_n\times V_n$ of the form
\[
((0,\ldots,0,a_{m},a_{m+1},\ldots,a_{2m}),(0,\ldots,0,(-1)^{i}a_{m},b_{m+1},\ldots,b_{2m}))\]
for $i=1,2$, and where $a_{k},b_{k}\in\kk$. By Proposition
\ref{prop:TheMainProp} (b)(i), we have
$M_{n,1}\subseteq\overline{\graphDn}$, and so
$\SDn=\overline{\graphDn}\cup M_{n,2}$. We clearly have
$\graphDn\not\subseteq M_{n,2}$. It remains to show that
$M_{n,2}\not\subseteq \overline{\graphDn}$.

The $\Ga$-actions we consider are in fact defined over $\QQ$. Thus, $\overline{\graphDn}$ is the zero set of an ideal generated by polynomials with coefficients in $\QQ$ (often called the Derksen-ideal, see \cite{hd:coi,  gk:cirgpc}). Clearly, this also holds for $M_{n,2}$.  Note that ideal inclusion can be decided using Gr\"obner Basis methods. Hence, the question of the inclusion of $M_{n,2}$ in  $\overline{\graphDn}$ will have the same answer over any field of characteristic zero, and we may assume $\kk=\CC$. Suppose, for a contradiction, that $M_{n,2}\subseteq \overline{\graphDn}$. Proposition
\ref{prop:TheMainProp}(b)(ii) then implies that $M_{n,2}\setminus
{\graphDn}\subseteq{\overline{\graphDn}}\setminus{\graphDn}\subseteq
{M_{n,1}}$. As $m\ge 3$, this is a contradiction. Indeed, if
$a=e_{m}$ and $b=e_{m}+e_{m+1}$ (where $e_{0},\ldots,e_{n}$ are the
standard basis vectors), then $(a,b)\in M_{n,2}\setminus\graphDn$,
but $(a,b)\not\in M_{n,1}$. 
\end{proof}
\end{thm}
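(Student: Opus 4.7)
The plan is to combine the decomposition of $\SDn$ given by Equation \eqref{eqn:BasicDecomp} with the containments in Proposition \ref{prop:TheMainProp}. That decomposition always expresses $\SDn$ as the union of $\overline{\graphDn}$ and an ``extra piece'' contained in $\V_{V_n}(I_n)\times\V_{V_n}(I_n)$, and Proposition \ref{prop:TheMainProp} is precisely what controls when the extra piece is already absorbed into $\overline{\graphDn}$.

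For part (a), I would treat three subcases. When $n$ is odd, Proposition \ref{prop:TheMainProp}(a) immediately absorbs the full extra piece $\V_{V_n}(I_n)\times\V_{V_n}(I_n)$. When $n=4k$, so that $m:=n/2$ is even and $(-1)^m=1$, the element $\delta(x_m)$ coincides with $(x_m\otimes 1)-(-1)^m(1\otimes x_m)$, so Proposition \ref{prop:TheMainProp}(b)(i) again absorbs the extra piece. The case $n=2$ needs a bit more care: $m=1$ is odd, and the extra piece consists of pairs $((0,a_1,a_2),(0,b_1,b_2))$ with $a_1^2=b_1^2$, splitting into $a_1=b_1$ and $a_1=-b_1$. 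If $a_1=b_1\neq 0$, one verifies directly that $t=(b_2-a_2)/a_1$ satisfies $t*a=b$; in the remaining subcase $a_1=-b_1$ (which covers $a_1=b_1=0$), Proposition \ref{prop:TheMainProp}(b)(i) with $(-1)^m=-1$ supplies the containment.

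For part (b), with $n=2m$ and $m\ge 3$ odd, I would first factor $\delta(x_m^2)=(x_m\otimes 1-1\otimes x_m)(x_m\otimes 1+1\otimes x_m)$ to decompose the extra piece in \eqref{eqn:BasicDecomp} as $M_{n,1}\cup M_{n,2}$. Proposition \ref{prop:TheMainProp}(b)(i) absorbs $M_{n,1}$ into $\overline{\graphDn}$, leaving $\SDn=\overline{\graphDn}\cup M_{n,2}$. The set $M_{n,2}$ is an irreducible affine subspace, and counting its defining equations yields $\dim M_{n,2}=n+1$, while $\dim\overline{\graphDn}=n+2$ by the remark preceding Example \ref{eg:F6}. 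To finish I need $M_{n,2}\not\subseteq\overline{\graphDn}$. Since both sets have $\QQ$-rational defining ideals, the question is field-independent, so I would assume $\kk=\CC$ and invoke Proposition \ref{prop:TheMainProp}(b)(ii): an inclusion $M_{n,2}\subseteq\overline{\graphDn}$ would force $M_{n,2}\setminus\graphDn\subseteq M_{n,1}$. This is ruled out by the explicit witness $(e_m,e_m+e_{m+1})$, which lies in $M_{n,2}$ (its first $m$ coordinates vanish and the $m$-th coordinates agree), fails to lie in $\graphDn$ by a direct computation with the flow of $D_n$, and is visibly outside $M_{n,1}$ because $a_m=b_m=1\neq -1$; the hypothesis $m\ge 3$ is what makes such a witness available.

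The main obstacle is this final non-inclusion step in part (b), which requires both the descent to $\CC$ (so that Proposition \ref{prop:TheMainProp}(b)(ii) applies) and a carefully chosen witness pair. A smaller secondary technicality is the $n=2$ subcase of part (a), where Proposition \ref{prop:TheMainProp}(b)(i) alone does not cover the diagonal half of the extra piece and one must produce an explicit $t\in\Ga$ realizing $b$ as $t*a$.
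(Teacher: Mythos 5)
Your proposal is correct and follows essentially the same route as the paper: the same three subcases in (a) (with the explicit $t=(b_2-a_2)/a_1$ for the diagonal half of the $n=2$ case), the same decomposition $\SDn=\overline{\graphDn}\cup M_{n,1}\cup M_{n,2}$ in (b), the same reduction to $\kk=\CC$ via $\QQ$-rationality of the defining ideals, and even the same witness $(e_m,e_m+e_{m+1})$ to rule out $M_{n,2}\subseteq\overline{\graphDn}$. No gaps.
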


\begin{cor}\label{cor:noPolySepAlg}
If $n=2m$ and $m\geq 3$ is odd, then $\kvndn$ does not admit a polynomial
separating algebra.
\begin{proof}
Follows directly from Theorem \ref{thm:MainThm}(b) and Proposition \ref{prop:PolySepAlg}.
\end{proof}
\end{cor}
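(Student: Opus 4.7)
The plan is to argue by contradiction, combining the two results cited in the author's one-line proof. Suppose, for a contradiction, that $\kvndn$ admits a polynomial separating algebra. Since the additive group action on $V_n$ is non-trivial (the Weitzenb\"ock derivation $D_n$ is nonzero for $n\ge 1$), Proposition \ref{prop:PolySepAlg} applies and tells us that every irreducible component of $\SDn$ has dimension at least $\dim V_n + 1 = (n+1)+1 = n+2$.

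On the other hand, for $n=2m$ with $m\ge 3$ odd, Theorem \ref{thm:MainThm}(b) describes $\SDn$ as the union of exactly two irreducible components: $\overline{\graphDn}$, of dimension $n+2$, and a second component
\[
\V_{V_{n}\times V_n}\bigl(x_{m}\otimes 1-1\otimes x_{m}\bigr)\cap \bigl(\V_{V_n}(I_n)\times \V_{V_n}(I_n)\bigr),
\]
of dimension $n+1$. Since $n+1<n+2$, this second component violates the lower bound supplied by Proposition \ref{prop:PolySepAlg}, giving the required contradiction. The main (and only) obstacle is simply pointing out the numerical comparison $n+1<n+2$, together with noting that the hypotheses of Proposition \ref{prop:PolySepAlg} are genuinely satisfied (namely, that $D_n$ is nonzero), so no further argument is needed.
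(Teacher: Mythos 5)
Your argument is correct and is exactly the paper's intended proof, merely spelled out: Theorem \ref{thm:MainThm}(b) provides a component of $\SDn$ of dimension $n+1<\dim V_n+1=n+2$, contradicting the conclusion of Proposition \ref{prop:PolySepAlg} for a nonzero $D_n$. Nothing further is needed.
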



\section{Proof of Propostion \ref{prop:TheMainProp}}\label{section:ProofOfMainProp}

We first prove a technical lemma, which in turn uses the following
well-known formula (see, for example \cite[Satz 1.25]{hrh-wh:ek}):
\begin{equation}\label{eqn:sumFormula}
\sum_{j=0}^{r}(-1)^{j}{r\choose j}{p-j \choose q}={p-r \choose p-q}\quad \text
{ for all }p\ge q\ge 0,~ r\ge 0.
\end{equation}

\begin{lem}\label{lem:TheMatrix}~\\ \vspace{-0.3cm}

\begin{enumerate}
\item[(a)] If $2m\le n$ are natural numbers, then $M_{m,n}=\left(\frac{1}{(n-i-j)!}\right)_{i,j=0,\ldots,m}\in\QQ^{(m+1)\times (m+1)}$ is an invertible matrix.

\item[(b)] For any  $m$, if
$A:=M_{m-1,2m}=\left(\frac{1}{(2m-i-j)!}\right)_{i,j=0,\ldots,m-1}\in\QQ^{m\times m}$ and
$v:=(\frac{1}{m!},\frac{1}{(m-1)!}\ldots,\frac{1}{1!})^{T}\in\QQ^{m}$, then $v^{T}A^{-1}v=1-(-1)^{m}$.
\end{enumerate}
\begin{proof}~\\
\noindent
(a) For each $i=0,\ldots,m$, multiply the $i$th line of $M_{m,n}$ by
$(n-i)!$ to obtain the matrix $\left(\frac{(n-i)!}{(n-i-j)!}\right)_{i,j=0,\ldots,m}$
 having the same rank as $M_{m,n}$. This is the evaluation matrix
 $(f_{j}(a_{i}))_{i,j=0,\ldots,m}$ of the polynomials
 $f_{j}=X(X-1)\cdots(X-j+1)$ of degree $j$ at the points $a_{i}=n-i$,  and thus, it is invertible.

\noindent
(b) Set
\[ x=\left(\frac{(-1)^{m+j}(2m-j+1)!}{(j-1)!(m-j+1)!}\right)_{j=1,\ldots,m}\in\QQ^{m}.\]
We first  show that $Ax=v$. For $i=1,\ldots,m$, we have to show that
\[
\sum_{j=1}^{m}\frac{(-1)^{m+j}(2m-j+1)!}{(2m-i-j+2)!(j-1)!(m-j+1)!}=\frac{1}{(m-i+1)!},
\]
which is equivalent to
\[
\sum_{j=1}^{m+1}\frac{(-1)^{m+j}(2m-j+1)!}{(2m-i-j+2)!(j-1)!(m-j+1)!}=0.
\]
The left-hand side is equal to
\begin{eqnarray*}
&&(-1)^{m+1}\sum_{j=0}^{m}\frac{(-1)^{j}(2m-j)!}{(2m-i-j+1)!j!(m-j)!}\\
 &=&(-1)^{m+1}\frac{(i-1)!}{m!}\underbrace{\sum_{j=0}^{m}(-1)^{j}{2m-j\choose
i-1}{m\choose j}}_{(*)}.
\end{eqnarray*}
Formula \eqref{eqn:sumFormula} with $r:=m$, $p:=2m$, $q:=i-1$ implies that the
sum $(*)$ is equal to ${m\choose 2m-i+1}$, which is zero for $i=1,\ldots,m$, and so $Ax=v$.

Next, we show that $v^{T}x=1-(-1)^{m}$, that is,
\[
\sum_{j=1}^{m}\frac{(-1)^{m+j}(2m-j+1)!}{(j-1)!((m-j+1)!)^{2}}=1-(-1)^{m},
\]
or again
\[
\sum_{j=0}^{m}\frac{(-1)^{j}(2m-j)!}{j!((m-j)!)^{2}} =
\sum_{j=0}^{m}(-1)^{j}{m\choose j} {2m-j\choose m}=1.\] Since
Formula \eqref{eqn:sumFormula} with $r=m$, $p=2m$, and $q=m$ yields
the last equality, we have shown that $v^TA^{-1}v=v^Tx=1-(-1)^m$.
\end{proof}
\end{lem}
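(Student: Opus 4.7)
My plan for part~(a) is to strip out the factorials by a row-scaling. Since $n\ge 2m$ ensures $(n-i)!\ne 0$ for $i=0,\ldots,m$, multiplying row $i$ by $(n-i)!$ preserves invertibility and converts the $(i,j)$-entry into the falling factorial $(n-i)(n-i-1)\cdots(n-i-j+1)$. This is precisely the polynomial $p_j(X):=X(X-1)\cdots(X-j+1)$ (which has exact degree~$j$) evaluated at $X=n-i$. Since $\{p_0,\ldots,p_m\}$ is a basis of polynomials of degree at most $m$ and the $m+1$ evaluation nodes $n,n-1,\ldots,n-m$ are distinct, the resulting evaluation matrix is nonsingular by the standard Vandermonde argument, so $M_{m,n}$ is invertible.

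For part~(b), I would avoid inverting $A$ outright and instead produce only a vector $x:=A^{-1}v$, since $v^{T}A^{-1}v=v^{T}x$. The plan has three steps: (1)~guess a closed form for $x$, (2)~verify $Ax=v$, (3)~compute $v^{T}x$. For~(1), small-$m$ experimentation (or, more structurally, the observation from part~(a) that $Ax=v$ becomes the problem of expressing the interpolant through the $m$ nodes $m+1,\ldots,2m$ of a certain polynomial in the basis $\{p_{0},\ldots,p_{m-1}\}$) should suggest an expression of the form
\[
x_j \;=\; \frac{(-1)^{m+j}(2m-j+1)!}{(j-1)!\,(m-j+1)!},\qquad j=1,\ldots,m.
\]
For~(2), fix~$i$, move the right-hand side to the left, clear factorials, and extend the summation by a vanishing boundary term so that it runs over a full range; the substitution $j\mapsto j+1$ and factoring out $(i-1)!/m!$ brings the sum into the shape $\sum_{k}(-1)^{k}\binom{m}{k}\binom{2m-k}{i-1}$, which by \eqref{eqn:sumFormula} with $(r,p,q)=(m,2m,i-1)$ equals $\binom{m}{2m-i+1}=0$ for $1\le i\le m$. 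For~(3), the same type of manipulation converts $v^{T}x$ into $\sum_{j=0}^{m}(-1)^{j}\binom{m}{j}\binom{2m-j}{m}$, which by \eqref{eqn:sumFormula} with $(r,p,q)=(m,2m,m)$ equals $1$; unwinding the index shift and the residual sign yields $v^{T}x=1-(-1)^{m}$.

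The main obstacle I expect is step~(1): producing a usable closed form for $x$. Once that is in place, both verifications reduce to mechanical applications of the Chu--Vandermonde-style identity \eqref{eqn:sumFormula}. Without such a guess one would be driven into explicit inversion of $A$, a Hankel matrix built from the exponential coefficients $1/k!$, which is considerably heavier; the interpolation interpretation afforded by part~(a) is the most natural source of the guess and is where I would invest the most care.
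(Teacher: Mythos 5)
Your proposal is correct and follows essentially the same route as the paper: the same row-scaling/falling-factorial (interpolation matrix) argument for part (a), the identical closed-form guess for $x=A^{-1}v$ in part (b), and the same two applications of Formula \eqref{eqn:sumFormula} with $(r,p,q)=(m,2m,i-1)$ and $(m,2m,m)$ to verify $Ax=v$ and evaluate $v^{T}x$. Nothing further is needed.
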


\begin{proof}[Proof of Proposition \ref{prop:TheMainProp}]
Set $m:=\lfloor\frac{n}{2}\rfloor$. We start by reformulating the
three statements:
\begin{itemize}
\item[(a)] Suppose $n=2m+1$ is odd. If
 \begin{equation}\label{eqn:formABodd}
\begin{array}{rl}
&a=(0,\ldots,0,a_{m+1},\ldots,a_{n})\\
\text{ and } &b=(0,\ldots,0,b_{m+1},\ldots,b_{n}),\end{array}\end{equation}
then  $(a,b)\in\overline{\graphDn}$.

\item[(b)(i)] Suppose $n=2m$ is even. If
\begin{equation}\label{eqn:formAbeven}
\begin{array}{rl}
&a=(0,\ldots,0,a_{m},a_{m+1},\ldots,a_{n})\\\text{  and }&
b=(0,\ldots,0,(-1)^{m}a
_{m},b_{m+1},\ldots,b_{n}),\end{array}\end{equation}
then $(a,b)\in\overline{\graphDn}$.

\item[(b)(ii)] If $\kk=\CC$, then every point $(a,b)\in\overline{\graphDn}\setminus\graphDn$ is of the form given in \eqref{eqn:formAbeven}.
\end{itemize}

We prove (a) and (b)(i) simultaneously by constructing a morphism
\[\begin{array}{lrcl}
f: & \kk  & \longrightarrow &V\times V\\
   & u     & \longmapsto     & (x(u),y(u)),\end{array}\]
   such that
   \begin{itemize}
   \item $f(0)=(a,b)$, as given in Equation \eqref{eqn:formABodd} or
\eqref{eqn:formAbeven}, if $n$ is odd or even, respectively,
   \item and  for each $u\neq 0$, we have $y(u)=\nicefrac{1}{u}*x(u)$.
   \end{itemize}

As $\overline{\graphDn}$ is Zariski-closed, $f^{-1}(\overline{\graphDn})$ is also Zariski-closed and will then contain $\kk\setminus \{0\}$. Thus, $ f^{-1}(\overline{\graphDn})$ must contain $\kk$, and in particular, we will have $(a,b)=f(0)\in \overline{\graphDn}$.

Set $m':=\lfloor\frac{n-1}{2}\rfloor$, so that for $n$ odd,  we have $m'=m$, and for $n$
 even, $m'=m-1$. Note that $n=(m+1)+m'$ in both cases. We impose the following restrictions:
 \[\begin{array}{rl}
 x(u)  := & \underbrace{(x_0(u),\ldots,x_{m'}(u)}_{:=\tilde{x}(u)},a_{m'+1},\ldots,a_n),\\
 y(u)  := & \underbrace{(y_0(u),\ldots,y_{m}(u)}_{:=\tilde{y}(u)},b_{m+1},\ldots,b_n).
 \end{array}\]
 For $z=(z_{i})_{i=0,\ldots,n}\in V_{n}$ and $t\in\Ga$, the group action
is given by
\[
t*z=\left(\sum_{i=0}^{k}\frac{t^{k-i}}{(k-i)!}z_{i}\right)_{k=0,\ldots,n}.
\]
If $x(u)$ and $y(u)$ define a morphism as desired, for $u\neq 0$, we must have $(\nicefrac{1}{u}*x(u))_{k=m+1,\ldots,n}=(b_{k})_{k=m+1,\ldots,n}$, or equivalently
\[
\sum_{i=0}^{m'}\frac{1}{u^{k-i}(k-i)!}x_{i}+\sum_{i=m'+1}^{k}\frac{1}{u^{k-i}(k-i)!}a_{i}=b_{k}
\quad \text{ for }k=n,\ldots,m+1.
\]
Set $\delta:=m-m'$, so that $\delta=0$ for $n$ odd, and $\delta=1$
for $n$ even, then $\tilde{x}(u)$ must be a solution of the
following system of linear equations:
\begin{equation}\label{eqn:MatrixFormeq}
\underbrace{\left(\begin{array}{cccc}
\frac{1}{u^n n!}&\frac{1}{u^{n-1}(n-1)!}&\dots&\frac{1}{u^{m+1}(m+1)!}\\
\frac{1}{u^{n-1}(n-1)!}&\ddots&&\vdots\\
\vdots&&\ddots&\vdots\\
\frac{1}{u^{m+1}(m+1)!}&\frac{1}{u^m m!}&\dots&\frac{1}{u^{\delta+1} (\delta+1)!}
\end{array}\right)}_{=:C(u)}\left(\begin{array}{c}x_{0}\\x_{1}\\\vdots\\x_{m'}\end{array}
\right)=\left(\begin{array}{c}p_{m}\\p_{m-1}\\\vdots\\p_{\delta}\end{array}
\right),
\end{equation}
where
$p_{k-m'-1}(u):=b_{k}-\sum_{i=m'+1}^{k}\frac{(\nicefrac{1}{u})^{k-i}}{(k-i)!}a_{i}$
for $k=n,\ldots,m+1$.

Observe that
\[C(u)=u^{\delta-1}\cdot\diag(u^{-m},u^{1-m},\ldots,u^{-\delta})\cdot A\cdot
\diag(u^{-m},u^{1-m},\ldots,u^{-\delta}),\] where $A:=M_{m',n}$ is
the invertible matrix of Lemma \ref{lem:TheMatrix}(a). Thus, for
nonzero $u$, we must have
\[
\tilde{x}(u)=\left(\begin{array}{c}x_{0}\\x_{1}\\\vdots\\x_{m'}\end{array}
\right)=u^{1-\delta}\diag(u^{m},u^{m-1},\ldots,u^{\delta})A^{-1}
\underbrace{\left(\begin{array}{c}u^{m}p_{m}\\u^{m-1}p_{m-1}\\\vdots\\u^{\delta}p_{\delta}\end{array}
\right)}_{=:q(u)}.
\]
Note that above, $q_k(u)=u^{k}p_{k}(u)$ is a polynomial in $u$, thus
for this choice of $\tilde{x}(u)$, we obtain a morphism satisfying
$x(0)=a$ as desired. For $u\neq 0$ and $k=0,\ldots,m'$, we must then
have
\begin{eqnarray*}y_{k}(u)&:=&\sum_{i=0}^{k}\frac{(1/u)^{k-i}}{(k-i)!}x_{i}(u)=\left(\frac{u^{-k}}{k!},\frac{u^{-k+1}}{(k-1)!},\ldots,\frac{1}{0!},0,\ldots,0\right)
\tilde{x}(u)\\
&=&u^{1-\delta}\left(\frac{u^{m-k}}{k!},\frac{u^{m-k}}{(k-1)!},\ldots,\frac{u^{m-k}}{0!},0,\ldots,0\right)A^{-1}q(u).
\end{eqnarray*}
This gives an expression of $y_k(u)$ as a polynomial in $u$. For $k=0,\ldots,m'$, we have $y_{k}(0)=0$. For $n$ odd, it already follows that $y(0)=b$, and we are done. If $n$ is even, then
\begin{equation}\label{eqn:yu} y_{m}(u)=a_{m}+\left(\frac{1}{m!},\frac{1}{(m-1)!},\ldots,\frac{1}{2!},\frac{1}{1!}\right)A^{-1}q(u),
\end{equation}
which is again polynomial in $u$.  It remains to
show that $y_{m}(0)=(-1)^{m}a_{m}$ for $n=2m$. As $q_{k}(0)=-\frac{a_{m}}{k!}$ for $k=\delta,\delta+1,\ldots,m$,
formula~\eqref{eqn:yu} yields
\begin{eqnarray*}
y_{m}(0)&=&a_{m}+\left(\frac{1}{m!},\frac{1}{(m-1)!},\ldots,\frac{1}{2!},\frac{1}{1!}\right)A^{-1}\left(\begin{array}{c}-\frac{a_{m}}{m!}\\-\frac{a_{m}}{(m-1)!}\\\vdots\\ -\frac{a_{m}}{1!}
    \end{array}\right)\\
&=&a_{m}(1-v^{T}A^{-1}v)=(-1)^ma_m,
\end{eqnarray*}
where $v=(\frac{1}{m!},\frac{1}{(m-1)!}\ldots,\frac{1}{1!})^{T}\in\kk^{m}$, and the last equality follows from Lemma \ref{lem:TheMatrix}(b).

We now prove (b)(ii). Recall that for a constructible subset $U$ in
an affine complex variety, the Zariski-closure coincides with the
closure taken in the Euclidean topology (see \cite[Satz
11.23]{mb:agee}). In particular, as images of morphisms are
constructible, this result holds for the image $\graphDn$ of the
graph morphism $\phi:\Ga\times V_{n}\rightarrow V_{n}\times V_{n}$
defined by $\phi(t,x)=(x,t*x)$. Let
$(a,b)\in\overline{\graphDn}\setminus\graphDn$. We must show that
$(a,b)$ is of the form described  in \eqref{eqn:formAbeven}. By
Proposition \ref{prop:orbitSeparation}, $(a,b)\in{\mathcal
S}_{D_{n}}\setminus\graphDn$ implies that
$(a,b)\in\V_{V_{n}}(I_{n})\times\V_{V_{n}}(I_{n})$, that is,
$a=(0,\ldots,0,a_{m},a_{m+1},\ldots,a_{n})$ and
$b=(0,\ldots,0,b_{m},b_{m+1},\ldots,b_{n})$. Thus, it remains to
show that  $b_{m}=(-1)^{m}a_{m}$. As $(a,b)\in\overline{\graphDn}$,
there exists a sequence $(t_{l},x^{l})_{l\in\NN}\in (\Ga\times
V_{n})^{\NN}$ such that
\[\lim_{l\rightarrow\infty}(x^{l},t_{l}*x^{l})=(a,b).\]
If the sequence $(t_{l})_{l\in\NN}$ was bounded, there would be a
convergent subsequence with limit $t'$, and as the group action is a
continuous map, we would have $(a,b)=(a,t'*a)\in\graphDn$, a
contradiction. Thus, the sequence $(t_{l})_{l\in\NN}$ is unbounded,
and we can assume
\[
\lim_{l\rightarrow \infty}t_{l}=\infty \quad \text{ and }t_{l}\ne 0 \text{ for
all }l\in\NN.
\]
 Set $y^{l}:=t_{l}*x^{l}$, and write
$x^{l}=(x_{0}^{l},x_{1}^{l},\ldots,x_{n}^{l})$ and
$y^{l}=(y_{0}^{l},y_{1}^{l},\ldots,y_{n}^{l})$. We then have the following
equations:
\[y_{k}^{l}=\sum_{i=0}^{k}\frac{t_{l}^{k-i}}{(k-i)!}x_{i}^{l}\quad\quad\text{
  for }k=0,\ldots,n.
\]
For each $k=m+1,\ldots,2m$, these equations can be written as
\[
\sum_{i=0}^{m-1}\frac{t_{l}^{k-i}}{(k-i)!}x_{i}^{l}=\underbrace{y_{k}^{l}-\sum_{i=m}^{k}\frac{t_{l}^{k-i}}{(k-i)!}x_{i}^{l}}_{=:p^{l}_{k-m}}.
\]

As in Equation \eqref{eqn:MatrixFormeq}, we may write
these equations in matrix form:
\begin{equation*}
\underbrace{\left(\begin{array}{cccc}
\frac{t_{l}^{2m}}{2m!}&\frac{t_{l}^{2m-1}}{(2m-1)!}&\dots&\frac{t_{l}^{m+1}}{(m+1)!}\\
\frac{t_{l}^{2m-1}}{(2m-1)!}&\ddots&&\vdots\\
\vdots&&\ddots&\vdots\\
\frac{t_{l}^{m+1}}{(m+1)!}&\frac{t_{l}^{m}}{m!}&\dots&\frac{t_{l}^{2}}{2!}
\end{array}\right)}_{=:C_{l}}\left(\begin{array}{c}x_{0}^{l}\\x_{1}^{l}\\\vdots\\x_{m-1}^{l}\end{array}
\right)=\left(\begin{array}{c}p_{m}^{l}\\p_{m-1}^{l}\\\vdots\\p_{1}^{l}\end{array}
\right).
\end{equation*}
If
$A:=M_{m-1,2m}=\left(\frac{1}{(2m-i-j)!}\right)_{i,j=0,\ldots,m-1}\in\QQ^{m\times m}$
is the invertible matrix of Lemma \ref{lem:TheMatrix}(a), then
\[C_{l}=\diag(t_{l}^{m},t_{l}^{m-1},\ldots,t_{l})\cdot A\cdot
\diag(t_{l}^{m},t_{l}^{m-1},\ldots,t_{l}),\]
and thus,
\[
\left(\begin{array}{c}x^{l}_{0}\\x^{l}_{1}\\\vdots\\x^{l}_{m-1}\end{array}
\right)=\diag(t_{l}^{-m},t_{l}^{-m+1},\ldots,t_{l}^{-1})A^{-1}\left(\begin{array}{c}t_{l}^{-m}p_{m}\\t_{l}^{-m+1}p_{m-1}\\\vdots\\t_{l}^{-1}p_{1}\end{array}
\right).
\]
We define $q_{k}^{l}:=t_{l}^{-k}p_{k}$ for $k=1,\ldots,m$, and all $l$. We have
\begin{eqnarray*}
y_{m}^{l}&=&x_{m}^{l}+\sum_{i=0}^{m-1}\frac{t_{l}^{m-i}}{(m-i)!}x_{i}^{l} \nonumber \\
&=&x_{m}^{l}+\left(\frac{t_{l}^{m}}{m!},\frac{t_{l}^{m-1}}{(m-1)!},\ldots,\frac{t_{l}}{1!}\right)\left(\begin{array}{c}x^{l}_{0}\\x^{l}_{1}\\\vdots\\x^{l}_{m-1}\end{array}
\right)  \nonumber \\
&=&x_{m}^{l}+\left(\frac{1}{m!},\frac{1}{(m-1)!},\ldots,\frac{1}{1!}\right)A^{-1}\left(\begin{array}{c}q_{m}^{l}\\q^{l}_{m-1}\\\vdots\\q^{l}_{1}\end{array}
\right). \label{eqn:yml}
\end{eqnarray*}
For $k=1,\ldots,m$, we have
\begin{eqnarray*}
\lim_{l\to\infty}q_{k}^{l}&=&\lim_{l\to\infty}t_{l}^{-k}\left(y_{k+m}^{l}-\sum_{i=m}^{k+m}\frac{t_{l}^{k+m-i}}{(k+m-i)!}x_{i}^{l}\right)\\&=&\lim_{l\to\infty}\left(-\frac{1}{k!}x_{m}^{l}\right)=-\frac{a_{m}}{k!}.
\end{eqnarray*}
Therefore,  by Lemma \ref{lem:TheMatrix}(b),
\begin{eqnarray*}
b_{m}&=&\lim_{l\to \infty}y_{m}^{l}=a_m-a_m\cdot v^{T}A^{-1}v\\&=& a_m-a_m\cdot
(1-(-1)^{m})=(-1)^{m}a_{m},
\end{eqnarray*}
 as desired.
\end{proof}


\bibliographystyle{plain}
\bibliography{reference}
\end{document}